\setlist[enumerate]{itemsep=2pt,topsep=3pt}
\setlist[itemize]{itemsep=2pt,topsep=3pt}
\setlist[enumerate,1]{label=(\roman*)}
\renewcommand{\leq}{\leqslant}
\renewcommand{\geq}{\geqslant}
\providecommand{\inner}[1]{\left\langle{#1}\right\rangle}
\newcommand\blfootnote[1]{%
  \begingroup
  \renewcommand\thefootnote{}\footnote{#1}%
  \addtocounter{footnote}{-1}%
  \endgroup
}
\newcommand{\la}{\langle}
\newcommand{\ra}{\rangle}
\newcommand{\mM}{\mathcal M}
\newcommand{\pP}{\mathring P}
\newcommand{\RR}{\mathbbm R}
\newcommand{\NN}{\mathbbm N}
\renewcommand{\phi}{\varphi}
\renewcommand{\epsilon}{\varepsilon}
\theoremstyle{plain}
\newtheorem{theorem}{Theorem}[section]
\newtheorem{corollary}[theorem]{Corollary}
\newtheorem{lemma}[theorem]{Lemma}
\newtheorem{proposition}[theorem]{Proposition}
\theoremstyle{definition}
\newtheorem{remark}{Remark}[section]
\begin{document}

\title{}

\begin{center}
    \huge
    Unique Solutions to Power-Transformed Affine Systems
    \blfootnote{The authors thank Shu Hu and Jingni Yang for thoughtful
    comments and suggestions.}
 
    \vspace{1em}

    \large
    John Stachurski,\textsuperscript{a} \\ Ole Wilms\textsuperscript{b} and Junnan Zhang\textsuperscript{c} \par \bigskip

    \small
    \textsuperscript{a} Research School of Economics, Australian National University \\
    \textsuperscript{b} Universit\"at Hamburg and Tilburg University  \\
    \textsuperscript{c} Center for Macroeconomic Research at School of
    Economics, and Wang Yanan Institute for Studies in Economics, Xiamen
    University \\ \bigskip

    \normalsize
    \today
\end{center}

\begin{abstract}
    \vspace{1em}
    Systems of the form $x = (A x^s)^{1/s} + b$ arise in a range of economic,
    financial and control problems, where $A$ is a linear operator acting on a
    space of real-valued functions (or vectors) and $s$ is
    a nonzero real value.  In these applications, attention is focused on
    positive solutions. We provide a simple and complete characterization of
    existence and uniqueness of positive solutions under conditions on
    $A$ and $b$ that imply positivity.  
\end{abstract}

\maketitle

\section{Introduction}

The system of equations 
\begin{equation}\label{eq:ab}
    x = Ax + b 
    \quad \text{where } A \text{ is a linear map}
\end{equation}
occurs naturally in many areas of mathematics and statistics.  For
example, an $x$ solving \eqref{eq:ab} is a steady state for 
the elementary vector difference equation $x_{t+1}
= A x_t + b$.  As a second example, consider the discrete Lyapunov
equation from control theory, which takes the form $G X G^\top - X + Q = 0$, where
all elements are matrices and $X$ is the unknown.  Since $X \mapsto G X
G^\top$ is linear, this is also a version of \eqref{eq:ab}.

In this paper, we study the transformed system 
\begin{equation}\label{eq:abs}
     x = (A x^s)^{1/s} + b, 
     \qquad s \in \RR \text{ and } s \not= 0,
\end{equation}
where powers are taken pointwise (e.g., if $x = (x_i) \in \RR^n$, then $x^s =
(x_i^s) \in \RR^n$). Such systems arise in a range of economic and
financial problems, as well as in discrete time dynamic optimization. In these
cases, the interest is typically on positive solutions, since they relate to
prices or physical quantities.  (Moreover, focusing on positive solutions
avoids having to handle fractional powers of negative numbers.)

In this paper we provide a complete characterization of existence and
uniqueness of positive solutions under conditions on $A$ and $b$ that
imply positivity. In particular, we show that, under a regularity condition that is
always satisfied for the finite-dimensional case, the system \eqref{eq:abs}
has a unique (strictly) positive solution if and only if $r(A)^s < 1$, where
$r(A)$ is the spectral radius of $A$. This finding is a natural extension of
the observation that, when $A$ is irreducible and $b$ is positive,
\eqref{eq:ab} has a unique positive solution if and only if $r(A) < 1$.  

We obtain the results stated above by applying a fixed point theorem for
positive concave (or convex) operators on the positive cone. This fixed point
theorem are a small extension of known results originally obtained by
\cite{du1990fixed}. A range of similar fixed point results based on
monotonicity and order concavity (or convexity) are also relevant \citep[see,
e.g.,][etc.]{amann1972number, amann1976fixed, krasnosel2012approximate,
guo1988nonlinear}, although they are not directly applicable in our
case.\footnote{Uniqueness of a positive solution can be guaranteed since the
    right hand side of~\eqref{eq:abs} is increasing and strongly sublinear
    \citep[Theorem 2.2.2]{guo1988nonlinear}. However, existence usually
    requires additional assumptions (see, e.g.,
\cite{krasnosel2012approximate}).}
Our work is also related to \cite{marinacci2019unique}, who study Tarski-type
fixed points of monotone operators with applications in recursive utilities, as
we do. However, they do not study the particular class of equations that we do
(i.e., those of type \eqref{eq:abs}). \cite{marinacci2010unique} and
\cite{borovivcka2020necessary} provide related results on existence and
uniqueness of recursive utilities.

Studies that directly tackle \eqref{eq:abs} are scarce. However, if we
write~\eqref{eq:abs} as $x = Bx + b$ where $B$ is allowed to be nonlinear,
then we obtain a generalization of operator equations studied by
\cite{zhai2008positive, zhai2010positive, berzig2013positive,
  zhai2017varphi}, among others. In this line of work, $B$ is typically
assumed to be $\alpha$-concave\footnote{An operator $B$ is $\alpha$-concave
  if for any $t \in (0, 1)$, there exists $\alpha \in (0, 1)$ such that
  $B(tx) \geq t^{\alpha}Bx$.}, a property not satisfied in our case when $Bx
= (Ax^s)^{1/s}$. To the best of our knowledge, the most closely related
result in this literature is Theorem 2.6 of \cite{zhai2008positive}, which
provides sufficient conditions for existence and uniqueness of positive
solutions when $B$ is homogeneous. (Despite its generality, they make
assumptions on the operator $B$ that are not applicable to our problem.)

In all of the work mentioned above, only sufficient conditions are provided.
In contrast, our aim is to give exact necessary and sufficient conditions for
existence and uniqueness of positive solutions. 

\section{Fixed Point Theory}\label{s:t}

Let $(B, \| \cdot \|, \leq)$ be a Banach lattice with positive cone $P$ (see,
e.g., \cite{meyer2012banach}). By definition, we have $x \leq y$ if and only
if $y - x \in P$. We assume throughout that $P$ is solid (i.e., has nonempty
interior), and denote the interior of $P$ by $\pP$. For $x, y \in B$, we write $x \ll y$ if $y - x \in \pP$. The
expression $x \ll y$ will sometimes be written $y \gg x$ with identical
meaning. Obviously (a) $y \in \pP$ if and only if $0 \ll y$ and (b) $x \ll y$
implies $x \leq y$.  In what follows we call elements of $\pP$ \emph{strictly
positive}.

Let $E$ be an arbitrary subset of $B$.  A function $G \colon E \to B$
is called \emph{order-preserving} on $E$ if $x, y \in E$ and $x \leq y$ implies $Gx \leq Gy$.
When $E$ is convex, the function $G$ is
called \emph{convex} on $E$ if
    $G(\lambda x + (1-\lambda) y) \leq \lambda Gx + (1-\lambda) Gy$
for all $x, y \in E$ and $\lambda$ in $[0,1]$.
$G$ is called \emph{concave} on $E$ if $-G$ is convex on $E$.
We call $G$ a self-map on $E$ if $x \in E$ implies $Gx \in E$. We
define a self-map $G$ on $E$ to be \emph{globally stable} on $E$ if $G$
has a unique fixed point $\bar x$ in $E$ and, for each $x \in E$,
we have $\| G^k x - \bar x \| \to 0$ as $k \to \infty$.

We let $B'$ be the topological dual space of $B$ and take $P'$ to be the set
of all positive linear functionals on $B$; that is, all $x' \in B'$ such that
$\inner{x', x} \geq 0$ for all $x \in P$.  The next lemma follows from
Corollary~2.8 of \cite{glueck2020almost}:

\begin{lemma}\label{l:glueck}
    In our setting, the following statements are equivalent:
    \begin{enumerate}
        \item $x \gg 0$.
        \item $\inner{x', x} > 0$ for all nonzero $x' \in P'$. 
        \item $\cup_{n \in \NN} [-nx, nx] = B$.
    \end{enumerate}
\end{lemma}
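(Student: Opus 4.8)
The plan is to derive the lemma from Corollary~2.8 of \cite{glueck2020almost}, after verifying that our hypotheses place us in the setting of that result. A Banach lattice is, in particular, an ordered Banach space whose positive cone $P$ is closed and generating, and we have assumed in addition that $P$ is solid; this is exactly the situation covered by the cited corollary, which describes the interior of $P$. Translating its conclusions into the present notation: $x$ belongs to $\pP$ if and only if $\inner{x', x} > 0$ for every nonzero $x' \in P'$, and if and only if $x$ is an order unit in the sense that $\cup_{n \in \NN}[-nx, nx] = B$. These are precisely statements (i), (ii) and (iii), so the three are equivalent.

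For completeness one can also supply the three implications directly. For (i)$\Rightarrow$(iii): if $0 \ll x$, fix $\epsilon > 0$ with $x + z \in P$ whenever $\|z\| < \epsilon$; then $x \pm \tfrac{\epsilon}{2\|y\|} y \geq 0$ for any nonzero $y \in B$, which rearranges to $-\tfrac{2\|y\|}{\epsilon} x \leq y \leq \tfrac{2\|y\|}{\epsilon} x$, and since $x \geq 0$ this puts $y \in [-nx, nx]$ for every integer $n \geq 2\|y\|/\epsilon$. For (iii)$\Rightarrow$(i): since $P$ is closed, each $[-nx, nx] = (nx - P) \cap (P - nx)$ is closed, so the Baire category theorem yields an $n$ for which $[-nx, nx]$, and hence $[-x, x] = n^{-1}[-nx, nx]$, has nonempty interior; as $[-x, x]$ is convex and symmetric about the origin, $0$ lies in its interior, and from $[-x, x] \subseteq x - P$ one gets $x + z \in P$ for all sufficiently small $z$, i.e.\ $x \in \pP$. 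For (i)$\Rightarrow$(ii): with $\epsilon$ as above and $x' \in P'$ nonzero, pick $y$ with $\|y\| < \epsilon$ and $\inner{x', y} < 0$; then $x + y \in P$ forces $\inner{x', x} \geq -\inner{x', y} > 0$. For (ii)$\Rightarrow$(i) one argues by contraposition: if $x \notin \pP$, then since $\pP$ is nonempty, open and convex, the geometric Hahn--Banach theorem separates $x$ from $\pP$, producing a nonzero $x' \in B'$ with $\inner{x', x} \leq \inner{x', u}$ for all $u \in \pP$, hence for all $u \in P = \overline{\pP}$; the cone property of $P$ then forces $\inner{x', \cdot} \geq 0$ on $P$ (otherwise the inequality fails as a scalar multiple of the offending point runs to $+\infty$) and $\inner{x', x} \leq 0$ (letting the multiple run to $0$), so $x' \in P' \setminus \{0\}$ contradicts (ii).

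The step I expect to demand the most care is (ii)$\Rightarrow$(i): one must use solidity of $P$ to guarantee that a separating functional exists at all, use $\overline{\pP} = P$ to transfer the separating inequality from the interior to the whole cone, and then exploit the cone structure to upgrade the separating functional to a positive one. The remaining arguments — scaling norm balls, the symmetry-and-convexity observation, and the Baire category step — are routine and rely only on the standing assumptions that $B$ is a Banach lattice with solid (hence closed and generating) positive cone.
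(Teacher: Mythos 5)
Your proposal is correct, and its first paragraph is exactly what the paper does: the paper offers no proof beyond noting that the lemma follows from Corollary~2.8 of \cite{glueck2020almost}. Your additional self-contained argument goes beyond the paper and checks out: (i)$\Rightarrow$(iii) by scaling a ball contained in $x+P$, (iii)$\Rightarrow$(i) via closedness of order intervals, the Baire category theorem and the symmetry/convexity of $[-x,x]$, (i)$\Rightarrow$(ii) by perturbing $x$ inside the cone, and (ii)$\Rightarrow$(i) by Hahn--Banach separation of $x$ from the open convex set $\pP$, using $\overline{\pP} = P$ (valid since $P$ is closed, convex and has nonempty interior) and the cone property to upgrade the separating functional to a nonzero element of $P'$ with $\inner{x', x} \leq 0$. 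What the direct route buys is independence from the cited reference, at the cost of standard but nontrivial functional-analytic machinery; the paper's route is just the citation.
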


Lemma~\ref{l:glueck} is useful for establishing basic properties of $\pP$.
For example,

\begin{lemma}\label{l:el}
    For elements $x, y, z \in B$, the following statements are true:
    \begin{enumerate}
        \item $x \geq 0$ and $y \gg 0$ implies $x + y \gg 0$.
        \item $x \geq y$ and $y \gg z$ implies $x \gg z$.
        \item $x \gg y$ and $y \geq z$ implies $x \gg z$.
        \item $\pP$ is a sublattice of $B$; that is, $x, y \gg 0$ implies $x \wedge y \gg 0$  and $x \vee y \gg 0$.
    \end{enumerate}
\end{lemma}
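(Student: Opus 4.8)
The plan is to derive all four statements from Lemma~\ref{l:glueck} together with the two elementary properties of the cone, $P + P \subseteq P$ and $\lambda P \subseteq P$ for $\lambda \geq 0$. I would prove (i) first and then bootstrap (ii)--(iv) from it.

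For (i) I would use the equivalence (1)$\Leftrightarrow$(2) of Lemma~\ref{l:glueck}. Fix an arbitrary nonzero $x' \in P'$. Since $x \geq 0$ and $x'$ is positive, $\inner{x', x} \geq 0$; since $y \gg 0$, Lemma~\ref{l:glueck} gives $\inner{x', y} > 0$. Hence $\inner{x', x + y} = \inner{x', x} + \inner{x', y} > 0$, and as $x'$ was an arbitrary nonzero element of $P'$, Lemma~\ref{l:glueck} yields $x + y \gg 0$. (Alternatively one can argue topologically: if $U$ is an open neighbourhood of $y$ with $U \subseteq P$, then $x + U$ is an open neighbourhood of $x + y$ contained in $x + P \subseteq P$, so $x + y \in \pP$.)

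Statements (ii) and (iii) then follow from (i) by decomposing the relevant difference. For (ii), write $x - z = (x - y) + (y - z)$; here $x - y \in P$ because $x \geq y$, and $y - z \gg 0$ because $y \gg z$, so (i) gives $(x - y) + (y - z) \gg 0$, i.e.\ $x \gg z$. Statement (iii) is the same after exchanging the two summands (addition is commutative): in $x - z = (x - y) + (y - z)$ we now have $x - y \gg 0$ and $y - z \in P$, and again (i) applies. For the join in (iv), note $x \vee y \geq x$ and $x \gg 0$, so applying (ii) with the roles $(x,y,z) \mapsto (x \vee y,\, x,\, 0)$ gives $x \vee y \gg 0$.

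The meet in (iv) is the only step where the full strength of Lemma~\ref{l:glueck} is needed, and it is the (modest) main obstacle, since one must pass from the two-sided order bounds that characterise strict positivity to a single bound involving an infimum. Here I would use the equivalence (1)$\Leftrightarrow$(3). Let $w \in B$ be arbitrary. Since $x \gg 0$ and $y \gg 0$, there are $n, m \in \NN$ with $-n x \leq w \leq n x$ and $-m y \leq w \leq m y$; setting $k = \max\{n, m\}$ and using $x \geq 0$, $y \geq 0$ we get $-k x \leq w \leq k x$ and $-k y \leq w \leq k y$. Then $w \leq k x$ and $w \leq k y$ give $w \leq (k x) \wedge (k y) = k(x \wedge y)$, and similarly $-w \leq k(x \wedge y)$, so $w \in [-k(x \wedge y),\, k(x \wedge y)]$. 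Since $w$ was arbitrary, $\bigcup_{k \in \NN} [-k(x \wedge y),\, k(x \wedge y)] = B$, and Lemma~\ref{l:glueck} gives $x \wedge y \gg 0$. The only vector-lattice fact invoked is the positive homogeneity of the lattice operations, $\lambda(x \wedge y) = (\lambda x) \wedge (\lambda y)$ for $\lambda \geq 0$, which is standard.
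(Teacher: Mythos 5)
Your proposal is correct and follows essentially the same route as the paper: (i) via the functional characterization in Lemma~\ref{l:glueck}, (ii)--(iii) by decomposing $x - z = (x-y)+(y-z)$ and invoking (i), the join via monotonicity, and the meet via the order-unit characterization $\cup_n [-n\,\cdot\,, n\,\cdot\,] = B$ together with $(kx)\wedge(ky) = k(x \wedge y)$. The only difference is cosmetic (you take $k = \max\{n,m\}$ where the paper says ``$n$ sufficiently large''), so there is nothing to add.
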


\begin{proof}
    Regarding (i), fix $x, y \in B$ with $x \geq 0$ and $y \gg 0$.  Fix
    nonzero $x' \in P'$.   Applying Lemma~\ref{l:glueck}, we have
        $\inner{x', x + y} = \inner{x', x} + \inner{x', y} > 0$.
    Hence $x \gg 0$.

    Regarding (ii), given nonzero $x' \in P'$, we have 
    $x = (x - y) + (y - z)$ and $x \gg 0$ follows from (i).  The proof of
    (iii) is similar.

    For the final claim, fix $x, y \gg 0$.  Since $x \vee y \geq x$,  the
    supremum is strictly positive. For the case of the infimum, let $z = x \wedge y$
    and pick any $b \in B$.  By Lemma~\ref{l:glueck}, for $n$ sufficiently
    large, we have $-nx \leq b \leq nx$ and $-ny \leq b \leq ny$.  
    It follows
    directly that $b \leq (nx) \wedge (ny) = n (x \wedge y) = nz$. Also, $nx,
    ny \geq -b$, so $(nx) \wedge (ny) \geq -b$, or $-nz \leq b$. In
    particular, there exists an $n \in \NN$ with $-nz \leq b \leq nz$.
    Applying Lemma~\ref{l:glueck} yields $z \gg 0$.
\end{proof}

\begin{theorem}\label{t:intconcave}
    Let $G$ be an order-preserving concave self-map on $\pP$. If
    \begin{enumerate}
        \item for all $x \gg 0$, there exists a $p \gg 0$ such that $p \leq
            x$ and $Gp \gg p$, and
        \item for all $x \gg 0$, there exists a $q \gg 0$ such that $x \leq
            q$ and $Gq \leq q$,
    \end{enumerate}
    then $G$ is globally stable on $\pP$.
\end{theorem}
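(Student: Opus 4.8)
The plan is to first extract a self-contained lemma and then deduce the theorem from it. The lemma: if $u_0, v_0 \in \pP$ satisfy $u_0 \leq v_0$, $G u_0 \gg u_0$ and $G v_0 \leq v_0$, then $[u_0, v_0]$ is $G$-invariant, $G$ has a unique fixed point $\bar x$ in $[u_0, v_0]$, this $\bar x$ lies in $\pP$, and $\|G^k z - \bar x\| \to 0$ for every $z \in [u_0, v_0]$. Granting this, the theorem is immediate on the level of a single orbit: given $x \gg 0$, hypotheses (i) and (ii) supply $p, q \gg 0$ with $p \leq x \leq q$, $G p \gg p$ and $G q \leq q$, and the lemma applied to $[p,q]$ gives a fixed point $\bar x_x \in \pP$ with $G^k x \to \bar x_x$. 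What remains after proving the lemma is to show that the $\bar x_x$ obtained in this way do not depend on $x$ (which simultaneously yields uniqueness of the fixed point in $\pP$).

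To prove the lemma, put $u_n = G^n u_0$ and $v_n = G^n v_0$; order-preservation makes $(u_n)$ increasing, $(v_n)$ decreasing, and $u_n \leq v_n$ for all $n$. Since $u_1 - u_0 = G u_0 - u_0 \gg 0$, part (iii) of Lemma~\ref{l:glueck} gives $N \in \NN$ (WLOG $N \geq 1$) with $v_0 - u_0 \leq N(u_1 - u_0)$; with $t := 1/N \in (0,1]$ this rearranges to $u_1 \geq (1-t)u_0 + t v_0$. The crucial move is to propagate this single inequality to every level: since $(1-t)u_{n-1} + t v_{n-1} \in \pP$ (convexity of $\pP$) and $u_n \geq (1-t)u_{n-1} + t v_{n-1}$, applying $G$ and then concavity gives $u_{n+1} = G u_n \geq G\big((1-t)u_{n-1} + t v_{n-1}\big) \geq (1-t) G u_{n-1} + t G v_{n-1} = (1-t)u_n + t v_n$, so $u_{n+1} \geq (1-t)u_n + t v_n$ for all $n$ by induction. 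Combined with $v_{n+1} \leq v_n$ this yields $0 \leq v_{n+1} - u_{n+1} \leq (1-t)(v_n - u_n)$, hence $0 \leq v_n - u_n \leq (1-t)^n (v_0 - u_0)$. As the lattice norm is monotone, $\|v_n - u_n\| \to 0$, and the sandwich $u_n \leq u_m \leq v_m \leq v_n$ for $m \geq n$ shows $(u_n)$ is Cauchy, so $u_n \to \bar x$ and likewise $v_n \to \bar x$. Closedness of $P$ gives $u_n \leq \bar x \leq v_n$ for all $n$; applying $G$ to these and passing to the limit gives $G\bar x = \bar x$, while $\bar x \geq u_0 \gg 0$ forces $\bar x \gg 0$ by Lemma~\ref{l:el}(ii). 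Finally, for $z \in [u_0, v_0]$ one has $u_n \leq G^n z \leq v_n$, so $\|G^n z - \bar x\| \leq \|v_n - u_n\| + \|u_n - \bar x\| \to 0$; in particular $\bar x$ is the only fixed point in $[u_0, v_0]$.

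For the coincidence of the various $\bar x_x$, suppose $[p_0, q_0]$ and $[p_1, q_1]$ are two $G$-invariant intervals arising from (i)–(ii), with respective fixed points $\bar x^{(0)}, \bar x^{(1)}$. Set $P_* := p_0 \wedge p_1$ and $Q_* := q_0 \vee q_1$, which lie in $\pP$ by Lemma~\ref{l:el}(iv). Applying (i) to $P_*$ and (ii) to $Q_*$ yields $p_* \gg 0$ and $q_* \gg 0$ with $p_* \leq P_* \leq Q_* \leq q_*$, $G p_* \gg p_*$ and $G q_* \leq q_*$, so $[p_*, q_*]$ is $G$-invariant and contains both $[p_0, q_0]$ and $[p_1, q_1]$. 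By the lemma, $[p_*, q_*]$ has a unique fixed point, which must therefore equal both $\bar x^{(0)}$ and $\bar x^{(1)}$; hence $\bar x^{(0)} = \bar x^{(1)}$. Thus there is a single $\bar x \in \pP$ with $G^k x \to \bar x$ for every $x \gg 0$, and any fixed point $\bar y \in \pP$ sits in its own $G$-invariant interval (apply (i)–(ii) to $\bar y$), so $\bar y = \bar x$; this establishes global stability on $\pP$. The main obstacle is the propagation step in the lemma — seeing that concavity allows the single contraction constant $t$ obtained at level $0$ from $G u_0 \gg u_0$ to be carried to every level, which is what converts the strict inequality into a genuine geometric rate; the remaining difficulty is the (easily overlooked) fact that uniqueness cannot be read off from concavity pointwise and must instead go through the interval-merging argument made possible by hypotheses (i) and (ii).
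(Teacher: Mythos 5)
Your proof is correct, but it takes a genuinely different route from the paper. Where you prove the key order-interval result from scratch, the paper simply cites Theorem~3.1 of Du (1990): given $p \leq q$ in $\pP$ with $Gp \gg p$ and $Gq \leq q$, that theorem already delivers existence, uniqueness and convergence of iterates on $[p,q]$ for an order-preserving concave map. Your lemma re-proves exactly this via the classical cone-concavity mechanism — extracting the constant $t = 1/N$ from $Gu_0 - u_0 \gg 0$ through Lemma~\ref{l:glueck}(iii), propagating $u_{n+1} \geq (1-t)u_n + t v_n$ by concavity, and concluding a geometric rate $0 \leq v_n - u_n \leq (1-t)^n(v_0-u_0)$ with norm monotonicity and closedness of the cone; I checked the induction and the limit/fixed-point steps and they are sound (in particular all convex combinations stay in $\pP$, so concavity and order-preservation apply where you use them). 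The second difference is in how the fixed points from different base points are identified: the paper fixes one $\bar x$, then for an arbitrary $y \in \pP$ builds an interval $[c,d]$ around $y \wedge \bar x$ and $y \vee \bar x$ via Lemma~\ref{l:el}(iv) and hypotheses (i)--(ii), and cites Du again; you instead merge two candidate intervals $[p_0,q_0]$, $[p_1,q_1]$ through $p_0 \wedge p_1$ and $q_0 \vee q_1$ and invoke uniqueness on the enclosing interval — the same lattice idea applied to endpoints rather than to $y$ and $\bar x$. What your version buys is self-containedness (no external fixed point theorem, and the geometric convergence rate is explicit); what the paper's version buys is brevity by outsourcing the contraction argument to Du's result.
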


\begin{proof}
    Fix $x \in \pP$, which is nonempty by assumption.  Choose $p, q \in \pP$ as
    in (i)--(ii) above.  Evidently $G$ is an order-preserving concave self-map on $[p,
    q]$. By these facts, the condition $Gp \gg p$ and Theorem~3.1 of
    \cite{du1990fixed}, $G$ has a unique fixed point $\bar x$ in $[p, q]$.
    Since $0 \ll p \leq \bar x$, we have $\bar x \in \pP$ by Lemma~\ref{l:el}.

    Now pick any $y \in \pP$.  We claim that $G^k y \to \bar x$ as $k \to
    \infty$. To see this, we use Lemma~\ref{l:el} to obtain $y \wedge \bar x \gg 0$,
    which means we can find $c \gg 0$ such that $Gc \gg c$ and $c \leq y
    \wedge \bar x$. In addition, $y \vee \bar x \gg 0$, so we can take $d \gg
    0$ such that $Gd \leq d$ and $y \vee \bar
    x \leq d$.  As a result,
    \begin{equation*}
        c \leq y \wedge \bar x \leq y, \bar x \leq y \vee \bar x \leq d.
    \end{equation*}
    In particular, both $y$ and $\bar x$ lie in $[c, d]$.  Moreover, applying
    the same result of \cite{du1990fixed} again, we see that $\bar x$ is the
    only fixed point of $G$ in $[c, d]$ and, moreover, $G^k y \to \bar x$
    as $k \to \infty$.

    We have now shown that $\bar x$ is a fixed point of $G$ in $\pP$
    and $G^k y \to \bar x$ for
    all $y \in \pP$.  This implies that $\bar x$ is the unique fixed point of $G$
    in $\pP$.  In particular, $G$ is globally stable on $\pP$.
\end{proof}

In some instances $G$ is defined over all of the positive cone $P$ and we are
concerned about properties of $G$ on the boundary of the cone.  The next
result provides conditions on $G$ that extend global stability to all of $P
\setminus \{0\}$.

\begin{theorem}\label{t:nonintconcave}
    Let $G$ be an order-preserving concave self-map on $P$ such that conditions
    (i)--(ii) from Theorem~\ref{t:intconcave} hold. If, in addition, for each 
    nonzero $x \in P$, there exists an $m \in \NN$ such that $G^m x \gg 0$,
    then $G$ is globally stable on $P \setminus \{0\}$.
\end{theorem}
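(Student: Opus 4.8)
The plan is to obtain global stability on $\pP$ directly from Theorem~\ref{t:intconcave}, and then to use the eventual-strict-positivity hypothesis to push that conclusion out to all of $P \setminus \{0\}$. First I would check that $G$ maps $\pP$ into $\pP$, so that Theorem~\ref{t:intconcave} is genuinely applicable: given $x \gg 0$, condition (i) supplies a $p \gg 0$ with $p \leq x$ and $Gp \gg p$, hence $Gx \geq Gp \gg 0$, so $Gx \gg 0$ by Lemma~\ref{l:el}. Thus the restriction of $G$ to $\pP$ is an order-preserving concave self-map on $\pP$ for which conditions (i)--(ii) hold, and Theorem~\ref{t:intconcave} yields a unique fixed point $\bar x \in \pP$ with $G^k x \to \bar x$ for every $x \in \pP$.

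Next I would establish the three ingredients needed for global stability on $P \setminus \{0\}$. (a) \emph{$G$ is a self-map on $P \setminus \{0\}$.} If $Gx = 0$ for some nonzero $x \in P$, then, since $0 \leq x$ and $G$ is order-preserving, $G0 \leq Gx = 0$; as $G0 \in P$ this forces $G0 = 0$, so $G^k x = 0$ for all $k \geq 1$ by induction, contradicting the hypothesis that $G^m x \gg 0$ for some $m$. (b) \emph{Uniqueness.} If $x^\ast \in P \setminus \{0\}$ satisfies $Gx^\ast = x^\ast$, pick $m \in \NN$ with $G^m x^\ast \gg 0$; since $G^m x^\ast = x^\ast$ we get $x^\ast \in \pP$, whence $x^\ast = \bar x$ by the uniqueness part of Theorem~\ref{t:intconcave}. (c) \emph{Convergence.} For arbitrary $y \in P \setminus \{0\}$, choose $m \in \NN$ with $G^m y \gg 0$; then $G^m y \in \pP$, so for $k \geq m$ we have $\|G^k y - \bar x\| = \|G^{k-m}(G^m y) - \bar x\| \to 0$ as $k \to \infty$ by the interior statement applied at $G^m y$. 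Combining (a)--(c) gives exactly that $G$ is globally stable on $P \setminus \{0\}$.

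I do not expect a genuine obstacle here: all the substantive work is already packed into Theorem~\ref{t:intconcave}. The only points demanding a little care are the boundary bookkeeping collected above — verifying that $G$ actually maps $\pP$ into $\pP$ (rather than invoking Theorem~\ref{t:intconcave} merely formally), ruling out that $G$ collapses a nonzero boundary point to $0$ (so that ``global stability on $P \setminus \{0\}$'' is well-posed), and noting that eventual positivity drags any fixed point into the interior. Once these are in place, the interior conclusion transfers essentially verbatim.
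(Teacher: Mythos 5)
Your proposal is correct, and its skeleton matches the paper's: first show that $G$ maps $\pP$ into itself so that Theorem~\ref{t:intconcave} applies and yields the unique interior fixed point $\bar x$, then use the hypothesis $G^m x \gg 0$ to extend the conclusion to the boundary. The difference is in how the tail of the orbit is handled. The paper aims for a quantitative statement: it fixes $m$ with $x_m := G^m x \gg 0$, asserts a geometric bound $\| G^k x_m - \bar x\| \leq \beta^k N$, and then patches the first $m$ iterates into a single estimate $\| G^k x - \bar x\| \leq \alpha^k M$. Your argument is more elementary: since $G^m y \in \pP$, you write $G^k y = G^{k-m}(G^m y)$ for $k \geq m$ and invoke convergence on $\pP$ directly, which is exactly what the paper's definition of global stability requires; this sidesteps the geometric-rate step, which as written in the paper asks for more than mere global stability on $\pP$ delivers (plain convergence does not in general admit a bound $\beta^k N$). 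You also make explicit two points the paper leaves implicit: that $G$ cannot send a nonzero element of $P$ to $0$, so $G$ is genuinely a self-map on $P \setminus \{0\}$ and the statement is well posed, and that any fixed point in $P \setminus \{0\}$ is dragged into $\pP$ by eventual strict positivity and hence equals $\bar x$ (in the paper this uniqueness is only implicit in the convergence claim). In short, your route buys simplicity and a cleaner logical fit with the stated definition, while the paper's route, were the rate assertion justified (say, from the quantitative content of Du's theorem), would additionally yield an explicit geometric convergence estimate.
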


\begin{proof}
    First we note that $G$ maps $\pP$ to itself. To see this, fix $x \in \pP$
    and use (i) to choose a $p \gg 0$ with $p \leq x$ and $Gp \gg p$.  By
    isotonicity of $G$ we have $Gx \geq Gp$.  Since $Gp \geq p \gg 0$, it
    follows that $Gx \in \pP$.  In view of Theorem~\ref{t:intconcave}, $G$ is
    globally stable when restricted to $\pP$, with unique fixed point
    $\bar x$.

    Now fix nonzero $x \in P$. Our proof will be complete if we can find an
    $\alpha \in (0,1)$ and an $M < \infty$ such that $\| G^k x - \bar x\| \leq
    \alpha^k M$ for all $k \in \NN$.

    By hypothesis, we can choose an $m \in \NN$ such that $x_m := G^m x \gg 0$.
    Also, since $G$ is globally stable when restricted to $\pP$, we can
    take a $\beta \in (0,1)$ and $N < \infty $ such that $\| G^k x_m - \bar
    x\| \leq \beta^k N$ for all $k \in \NN$.  Let $e_i = \| G^i x - x_m \|$
    for $i=1, \ldots, m$ and let $e = \max_{i \leq m} e_i$.
    If we set $\alpha := \beta$ and $M := \beta^{-m} \max\{N, e\}$, it is easy
    to verify that $\| G^k x - \bar x\| \leq
    \alpha^k M$ for all $k \in \NN$.
    We conclude that $G$ is globally stable on $P \setminus \{0\}$, as
    claimed.
\end{proof}

Next we turn to the convex case.

\begin{theorem}\label{t:convex}
    Let $G$ be an order-preserving convex self-map on the positive cone $P$.  Suppose that
    \begin{enumerate}
        \item $G0 \gg 0$ and
        \item for all $x \in \pP$, there exists a $b \in P$ such that $x \leq
            b$ and $Gb \ll b$.
    \end{enumerate}
    If these two conditions hold, then $G$ is globally stable on $\pP$.
\end{theorem}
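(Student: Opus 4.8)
The plan is to conjugate $G$ by an order-reversing reflection of a suitable order interval so that it becomes concave, apply Theorem~\ref{t:intconcave} (equivalently, Theorem~3.1 of~\cite{du1990fixed}) on that interval, and then propagate global stability to all of $\pP$ by enlarging the interval, in the same spirit as the proofs of Theorems~\ref{t:intconcave}--\ref{t:nonintconcave}.

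First I would fix any point of the nonempty set $\pP$ and use hypothesis~(ii) to produce a $c \in P$ with $Gc \ll c$; since $c \geq Gc \geq G0 \gg 0$, Lemma~\ref{l:el} gives $c \gg 0$, and because $G$ is order-preserving with $G0 \geq 0$ and $Gc \ll c$, it maps the order interval $[0,c]$ into itself. Next I would set $\sigma x := c - x$ on $[0,c]$, an order-reversing involution of $[0,c]$, and define $\Phi := \sigma \circ G \circ \sigma$, so that $\Phi x = c - G(c-x)$. The routine verifications are: $\Phi$ is order-preserving (it is $\sigma\circ G\circ\sigma$ with $\sigma$ order-reversing and $G$ order-preserving); $\Phi$ is concave on $[0,c]$, because $z \mapsto G(c-z)$ is the composition of $G$ with an affine map and hence convex, so $z \mapsto c - G(c-z)$ is concave; $\Phi$ is a self-map of $[0,c]$, since $x \in [0,c]$ gives $c - Gc \leq \Phi x \leq c - G0$ with $c - Gc \gg 0$ and $c - G0 \leq c$; and $\Phi 0 = c - Gc \gg 0$, so the bottom endpoint $0$ plays the role of the strict subsolution in Theorem~\ref{t:intconcave}.

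Then I would invoke Theorem~3.1 of~\cite{du1990fixed} on $[0,c]$ exactly as in the proof of Theorem~\ref{t:intconcave}: it gives a unique fixed point $x^\ast$ of $\Phi$ in $[0,c]$ with $\Phi^k x \to x^\ast$ for every $x \in [0,c]$. Since $\sigma^2$ is the identity on $[0,c]$ we have $\Phi^k = \sigma \circ G^k \circ \sigma$, so $x \mapsto c - x$ is a bijection between the fixed points of $\Phi$ and those of $G$ in $[0,c]$ and $\Phi^k(c-y) = c - G^k y$; consequently $\bar x := c - x^\ast$ is the unique fixed point of $G$ in $[0,c]$, it satisfies $\|G^k y - \bar x\| \to 0$ for all $y \in [0,c]$, and from $x^\ast \leq c - G0$ we get $\bar x \geq G0 \gg 0$, hence $\bar x \in \pP$. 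To reach all of $\pP$, given any $y \in \pP$ I would apply~(ii) to $c \vee y$ (which lies in $\pP$ by Lemma~\ref{l:el}) to get $c' \geq c \vee y$ with $Gc' \ll c'$, and rerun the construction with $c'$ in place of $c$: this yields the unique fixed point of $G$ in $[0,c']$ together with global convergence on $[0,c']$, but $\bar x \in [0,c] \subseteq [0,c']$ is a fixed point of $G$ and so must equal it, while $y \in [0,c']$, whence $G^k y \to \bar x$. Thus $\bar x$ is a fixed point of $G$ in $\pP$ attracting every point of $\pP$, so it is the unique such fixed point and $G$ is globally stable on $\pP$.

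The only step that needs an idea is the conjugation: one must recognize that, after the reflection $x \mapsto c - x$, hypothesis~(i) becomes a strict subsolution at the bottom of $[0,c]$ and hypothesis~(ii) supplies a supersolution at the top, i.e.\ precisely the data required by the concave fixed point theorem. Everything else — the convexity and monotonicity bookkeeping for $\Phi$, the self-map checks on $[0,c]$, and the interval-enlargement that upgrades convergence from $[0,c]$ to $\pP$ — is routine and mirrors arguments already used in the proofs of Theorems~\ref{t:intconcave} and~\ref{t:nonintconcave}.
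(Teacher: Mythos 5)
Your proof is correct, but it reaches the key step by a different route than the paper. The paper applies Theorem~3.1 of \cite{du1990fixed} directly in its \emph{convex} form: on $[0,b]$ with $Gb \ll b$ that theorem already delivers a unique fixed point and convergence of iterates, strict positivity then follows from $\bar x = G\bar x \geq G0 \gg 0$, and for arbitrary $y \in \pP$ one picks $d \geq y \vee \bar x$ with $Gd \ll d$ and reapplies the same theorem on $[0,d]$. You instead reduce the convex case to the concave one by conjugating with the order-reversing reflection $\sigma x = c - x$ of $[0,c]$: your checks that $\Phi = \sigma \circ G \circ \sigma$ is order-preserving, concave, a self-map of $[0,c]$ with $\Phi 0 = c - Gc \gg 0$ and $\Phi c = c - G0 \ll c$ are all sound, the dictionary $\Phi^k = \sigma \circ G^k \circ \sigma$ transfers uniqueness and convergence correctly, and your interval-enlargement step with $c' \geq c \vee y$ is the same device the paper uses in its second paragraph. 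What your route buys is economy of inputs: only the concave branch of Du's theorem (the one behind Theorem~\ref{t:intconcave}) is needed, so the convex theorem becomes a formal corollary of the concave machinery; what it costs is the extra conjugation bookkeeping that the paper avoids by citing the convex case directly. One small addendum: global stability on $\pP$ as defined in Section~\ref{s:t} also requires $G$ to map $\pP$ into itself, which you should note explicitly; it follows in one line from $Gx \geq G0 \gg 0$ and Lemma~\ref{l:el} (the paper dismisses it with ``Evidently'').
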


\begin{proof}
    Fix $x \in \pP$ and choose $b \in P$ as in (ii) above.  Since $G$ is an
    order-preserving convex self-map on $[0, b]$ and $Gb \ll b$, Theorem~3.1 of
    \cite{du1990fixed} implies that $G$ has a unique fixed point $\bar x$ in $[0,
    b]$.  Moreover, $0 \leq \bar x$, so isotonicity of $G$ implies $0 \ll G0 \leq
    G\bar x = \bar x$. Hence $\bar x \in \pP$.

    Now pick any $y \in \pP$.  We claim that $G^k y \to \bar x$.
    Since $y \vee \bar x \in \pP$, so we can take $d \in P$ such that $Gd \ll d$
    and $y \vee \bar x \leq d$.  Note that $0 \leq y, \bar x \leq y \vee \bar
    x\leq
    d$. In particular, $y$ and $\bar x$ are in $[0, d]$.  Applying the same
    result of \cite{du1990fixed} again, this time to $G$ on $[0, d]$, we see
    that $\bar x$ is the only fixed point of $G$ in $[0, d]$ and, moreover, $G^k
    y \to \bar x$.  This last result also implies that $\bar x$ is the
    only fixed point of $G$ in $\pP$.
    Evidently $G$ maps $\pP$ to itself, so the proof is now done.
\end{proof}

\section{Power-Transformed Affine Systems}

In this section we apply the fixed point results of Section~\ref{s:t} to the
class of nonlinear equations discussed in the introduction.  Below, in
Section~\ref{s:ea}, we will discuss how these problems arise in applications.
Here we focus on characterizing existence and uniqueness of positive
solutions.

\subsection{Environment}

Returning to the system \eqref{eq:abs}, we assume throughout that the unknown
object $x$ takes values in the set $C(T)$ of continuous real-valued
functions on compact Hausdorff space $T$, and that $A$ is a linear
operator from $C(T)$ to itself. The set $C(T)$ is a Banach lattice when
paired with the supremum norm and the usual pointwise partial order. In line
with Section~\ref{s:t}, we let $P$ be the positive cone of $C(T)$ and $\pP$
be the interior of $P$. An \emph{ideal} in $C(T)$ is a vector subspace $I
\subset C(T)$ such that $g \in I$ and $f \in C(T)$ with $|f| \leq |g|$
implies $f \in I$.

As in the introduction, powers are pointwise operations, so that $x^s$ is the
function $T \ni t \mapsto x^s(t)$. For the powers in \eqref{eq:abs} to make
sense we operate only on positive elements of $C(T)$, as clarified below.
Obviously, our treatment includes the finite-dimensional case.  (If $T$ is
finite with $n$ elements, then we endow $T$ with the discrete topology, in
which case $C(T)$ is isometrically isomorphic to $\RR^n$ and \eqref{eq:abs}
can be viewed as a system of $n$ equations in $\RR^n$.)

A linear operator $A \colon C(T) \to C(T)$ is called \emph{positive} if $A$ is
a self-map on $P$ and \emph{eventually compact} if there exists a $k \in \NN$
such that $A^k$ is compact (i.e., maps the unit ball of $C(T)$ to a relatively
compact subset of $C(T)$). A positive linear operator $A$ is called
\emph{irreducible} if the only nontrivial closed ideal on which $A$ is
invariant is the whole space $C(T)$. The spectral radius $r(A)$ of $A$ can be
defined by Gelfand's formula $r(A) = \lim_{k \to \infty} \| A^k \|^{1/k}$,
where $\| \cdot \|$ is the operator norm on the bounded linear operators from
$C(T)$ to itself.

\subsection{Equivalency}

Let $A \colon C(T) \to C(T)$ be a positive linear operator and suppose $b \in
C(T)$ and $s \in \RR$ with $s \not=0$ and $b \gg 0$. In studying
\eqref{eq:abs}, we also handle the representation
\begin{equation}\label{eq:yrep}
    y = ((Ay)^{1/s} + b)^s,
\end{equation}
which is obtained from \eqref{eq:abs} via the change of variable $y = x^s$.
The system \eqref{eq:yrep} is often easier to handle than the
original system \eqref{eq:abs}, largely because the transformation on the
right is decomposed into one purely linear operation (i.e., $y \mapsto Ay$) and
one nonlinear operation.

We solve the power-transformed affine systems \eqref{eq:abs} and \eqref{eq:yrep}
by converting them into fixed point problems associated with the self-maps $F,
G$ on $\pP$ defined by 
\begin{equation*}
    Fx = (A x^s)^{1/s} + b 
    \quad \text{and} \quad
    Gy = ((Ay)^{1/s} + b)^s
\end{equation*}
Let $H$ be the homeomorphism from $\pP$ to itself defined by $Hx = x^s$.
Then $F$ and $G$ are topologically conjugate under $H$, in the sense that $H\circ F
= G\circ H$.  As a result, $F$ has a unique fixed point in $\pP$ if and only if the
same is true for $G$, and, moreover, fixed points of $F$ and $G$ have the same
stability properties.

\subsection{Positive Solutions}

In what follows,
if $x \in P$ and $x$ satisfies \eqref{eq:abs} then we call $x$ a
\emph{positive} solution to \eqref{eq:abs}.  If, in addition, $x \gg 0$, then
we call $x$ a \emph{strictly positive} solution.  Analogous terminology is
used for solutions to \eqref{eq:yrep}.

\begin{theorem}\label{t:tlbk}
    Suppose $b \gg 0$ and $s \in \RR$ with $s \not=0$.  If
    $A$ is irreducible and eventually compact, then the following statements
    are equivalent:
    \begin{enumerate}
        \item $r(A)^s < 1$.
        \item $G$ is globally stable on $\pP$.
    \end{enumerate}
    Moreover, if $r(A)^s \geq 1$, then $G$ has no fixed point in $\pP$
\end{theorem}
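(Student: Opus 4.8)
The plan is to connect the two power-transformed systems to the underlying linear theory via the change of variables already recorded in the excerpt, and then to invoke the Perron–Frobenius–type structure available for irreducible eventually compact positive operators on $C(T)$. Recall that $G$ and $F$ are topologically conjugate under $H x = x^s$, so $G$ is globally stable on $\pP$ if and only if $F$ is. I would phrase the analysis around whichever of $F$, $G$ is more convenient for each implication, and then transport the conclusion.

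For the implication (i) $\Rightarrow$ (ii): assume $r(A)^s < 1$. I would verify the hypotheses of Theorem~\ref{t:intconcave} for the map $G y = ((Ay)^{1/s} + b)^s$ on $\pP$ (or equivalently for $F$). Order-preservation is immediate since $A$ is positive and the scalar maps $u \mapsto u^{1/s}$, $u \mapsto u^s$ are monotone on $(0,\infty)$; concavity of $G$ on $\pP$ follows from a pointwise check (composition of the linear map $Ay$ with the appropriate concave scalar transformation — here one must track the sign of $s$, but in all cases the resulting self-map is concave on the positive cone, which is exactly the structural fact the whole section is built on). The two nontrivial conditions are the existence of a strictly positive subsolution $p$ with $Gp \gg p$ and a strictly positive supersolution $q$ with $Gq \leq q$. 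For the subsolution, since $A$ is irreducible and eventually compact, $r(A)$ is an eigenvalue with a strictly positive eigenfunction $e \gg 0$ (Krein–Rutman / de Pagter); taking $p = \lambda e$ for small $\lambda > 0$, one computes $Gp$ and uses $r(A)^s < 1$ together with $b \gg 0$ to get $Gp \gg p$ for $\lambda$ small (the $b$ term dominates near $0$). For the supersolution, one uses $r(A)^s < 1$ to produce a bounded solution of the linear comparison problem: e.g. since $r(A) < 1$ when $s>0$, the Neumann series gives a strictly positive $w$ with $Aw + (\text{const}) \leq w$, from which a large multiple of $w$ (or of $e$) yields $Gq \leq q$; the case $s < 0$ is handled by the same device after the conjugacy. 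Then Theorem~\ref{t:intconcave} gives global stability of $G$ on $\pP$.

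For the converse, I would prove the contrapositive together with the final sentence of the theorem: assume $r(A)^s \geq 1$ and show $G$ has no fixed point in $\pP$ (which in particular rules out global stability). Suppose $y \gg 0$ solved $y = ((Ay)^{1/s}+b)^s$. Passing back through $H$, $x = y^{1/s} \gg 0$ would solve $x = (Ax^s)^{1/s}+b$, i.e. $x^s = A x^s + b^{\langle s\rangle}$-type identity — more precisely $x^s - A x^s = (\,\cdot\,)$; cleaner is to work with \eqref{eq:yrep} directly: $y^{1/s} = (Ay)^{1/s} + b \gg (Ay)^{1/s}$, hence $y \geq Ay$ when $s>0$ (and the reversed-type inequality, handled via conjugacy, when $s<0$), with strict slack coming from $b \gg 0$. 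Pairing with the strictly positive left eigenfunctional $e'$ of $A$ (again available by irreducibility and eventual compactness, with eigenvalue $r(A)$), I get $\langle e', y\rangle \geq \langle e', Ay\rangle = r(A)\langle e', y\rangle$, and the strict slack forces $\langle e', y\rangle > r(A)\langle e', y\rangle$, i.e. $r(A) < 1$, hence $r(A)^s < 1$ — contradicting $r(A)^s \geq 1$. Combining: if $r(A)^s \geq 1$ then no fixed point in $\pP$, so (ii) fails; and (ii) $\Rightarrow$ (i) follows. The main obstacle I anticipate is bookkeeping the two sign regimes $s>0$ and $s<0$ uniformly — in particular making sure the concavity/monotonicity of $G$ and the direction of the comparison inequalities are stated correctly in each case — and citing the correct infinite-dimensional Perron–Frobenius statement (existence of strictly positive right eigenfunction $e$ and strictly positive eigenfunctional $e'$ at $r(A)$) for irreducible eventually compact positive operators on $C(T)$; once those two ingredients are in place the rest is routine.
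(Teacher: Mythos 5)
Your converse argument (pairing a strictly positive fixed point with the strictly positive left eigenfunctional at $r(A)$ to force $r(A)<1$ when $s>0$, and the reversed inequality forcing $r(A)>1$ when $s<0$) is essentially the paper's Proposition~\ref{p:bkn}, and your eigenvector-based sub/supersolutions are in the spirit of Proposition~\ref{p:inada}. However, there is a genuine gap in the forward direction: your claim that ``in all cases the resulting self-map is concave on the positive cone'' is false. For $s \in (0,1)$ the map $G$ is \emph{convex}, not concave: with $\phi_t(r) = (r^{1/s} + b(t))^s$ and, say, $s = 1/2$, one gets $\phi_t(r) = (r^2 + b(t))^{1/2}$, which is convex. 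The paper's Proposition~\ref{p:sp} records exactly this dichotomy ($G$ concave for $s \in \RR \setminus [0,1)$, convex for $s \in (0,1]$), and consequently its proof of Theorem~\ref{t:tlbk} splits into two cases, invoking Theorem~\ref{t:intconcave} only when $s<0$ or $s\geq 1$, and invoking the separate convex-operator result, Theorem~\ref{t:convex} (which needs $G0 \gg 0$ and a \emph{strict} supersolution $Gq \ll q$), when $s \in (0,1]$. Since Du's fixed point theorem and Theorem~\ref{t:intconcave} require concavity, your plan of verifying the hypotheses of Theorem~\ref{t:intconcave} uniformly in $s$ does not establish (i) $\Rightarrow$ (ii) on the regime $s \in (0,1)$; you must either prove convexity there and switch to Theorem~\ref{t:convex}, or supply a different argument.

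Two smaller points. First, your subsolution heuristic (``the $b$ term dominates near $0$'') is the right mechanism only for $s>0$, $r(A)<1$; for $s<0$, $r(A)>1$ the relevant fact is that $G(ce)/(ce) \to r(A) > 1$ uniformly as $c \to 0$, so the sign bookkeeping you flag is not merely cosmetic. Second, the Neumann-series construction of the supersolution does not transfer cleanly through the pointwise powers (a solution of the linear comparison $Aw + \text{const} \leq w$ does not obviously yield $((Aq)^{1/s}+b)^s \leq q$ after scaling); the eigenvector scaling $q = ce$ with $c$ large, which you mention parenthetically and which the paper uses, is the argument that actually closes this step in both sign regimes.
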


As an immediate consequence, we have

\begin{corollary}\label{c:bk}
    Under the conditions on $A, b$ and $s$ stated in Theorem~\ref{t:tlbk},
    both \eqref{eq:abs} and \eqref{eq:yrep} have a unique strictly positive
    solution if and only if $r(A)^s < 1$.
\end{corollary}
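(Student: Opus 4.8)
The plan is to read the corollary entirely through the fixed point reformulation already set up in the paper, so that it becomes a direct consequence of Theorem~\ref{t:tlbk} together with the equivalency established earlier in this section. First I would record the elementary dictionary between solutions and fixed points: by the definitions of $F$ and $G$, a point $x \in \pP$ is a strictly positive solution of \eqref{eq:abs} exactly when $Fx = x$, and a point $y \in \pP$ is a strictly positive solution of \eqref{eq:yrep} exactly when $Gy = y$. Thus the corollary is really a statement about the fixed point sets of $F$ and $G$ in $\pP$, and it suffices to control the cardinality of those sets.

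Next I would transfer the count from one system to the other using the conjugacy $H \circ F = G \circ H$, where $Hx = x^s$ is a homeomorphism of $\pP$ onto itself. If $Fx = x$ then $G(Hx) = H(Fx) = Hx$, and if $Gy = y$ then $F(H^{-1}y) = H^{-1}(Gy) = H^{-1}y$; hence $H$ restricts to a bijection between the fixed points of $F$ in $\pP$ and those of $G$ in $\pP$. In particular these two sets have the same cardinality, so \eqref{eq:abs} has a unique strictly positive solution if and only if \eqref{eq:yrep} does, and it is enough to determine when $G$ has exactly one fixed point in $\pP$.

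With this reduction the two implications fall straight out of Theorem~\ref{t:tlbk}. For sufficiency, if $r(A)^s < 1$ then $G$ is globally stable on $\pP$, which in particular yields a unique fixed point of $G$ in $\pP$; passing through the bijection above, \eqref{eq:abs} and \eqref{eq:yrep} then each have exactly one strictly positive solution. For necessity I would argue by contraposition: if instead $r(A)^s \geq 1$, the final clause of Theorem~\ref{t:tlbk} states that $G$ has no fixed point in $\pP$, so \eqref{eq:yrep} (and hence, via $H$, also \eqref{eq:abs}) has no strictly positive solution at all, let alone a unique one. Therefore the existence of a unique strictly positive solution forces $r(A)^s < 1$, completing the biconditional. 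There is no substantive obstacle here; the corollary is essentially bookkeeping on top of the theorem. The only points requiring a little care are (a) confirming that the conjugacy transfers uniqueness in \emph{both} directions, which I would settle with the explicit $H$ and $H^{-1}$ computation above rather than merely citing the one-line claim from the equivalency discussion, and (b) keeping the biconditional clean by routing the $r(A)^s \geq 1$ case through the ``moreover'' clause, so that both the existence and the uniqueness halves of the statement are covered at once.
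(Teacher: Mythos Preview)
Your proposal is correct and follows exactly the route the paper intends: the corollary is stated as ``an immediate consequence'' of Theorem~\ref{t:tlbk}, and your argument simply unpacks that phrase by combining the conjugacy $H \circ F = G \circ H$ from the equivalency subsection with both the equivalence (i)$\iff$(ii) and the ``moreover'' clause of the theorem. There is nothing to add or correct.
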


\begin{remark}\label{r:ir}
    Since $A$ is irreducible we have $r(A) > 0$ (see, e.g.,
    \cite{meyer2012banach}, Lemma~4.2.9).  Hence we can rewrite $r(A)^s < 1$
    as $s \cdot \ln r(A) < 0$.  Thus, condition (ii) is equivalent
    to the statement that $r(A) \not= 1$ and $s$ and $\ln r(A)$ have opposite signs.
\end{remark}

\subsection{Proof of Theorem~\ref{t:tlbk}}

We assume throughout that the conditions on $A, s$ and $b$ imposed in
Theorem~\ref{t:tlbk} hold. 

\begin{proposition}\label{p:sp}
    $G$ is order-preserving on $\pP$ and has the following shape properties:
    \begin{enumerate}
        \item $G$ is concave on $\pP$ whenever $s \in \RR \setminus [0, 1)$.
        \item $G$ is convex on $\pP$ whenever $s \in (0, 1]$.
    \end{enumerate}
\end{proposition}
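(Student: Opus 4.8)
The plan is to express $G$ as a composition and reduce all three assertions to a one-variable calculation. Writing $\phi$ for the pointwise map $\phi(z) = (z^{1/s}+b)^s$, we have $Gy = \phi(Ay)$. Because $A$ is irreducible, $A$ maps $\pP$ into $\pP$: if $(A\1)(t_0)=0$ then $\delta_{t_0}\circ A=0$ by positivity, so $\{f : f(t_0)=0\}$ is a nontrivial closed $A$-invariant ideal, which is impossible; hence $A\1 \gg 0$, and $y \geq \varepsilon\1$ gives $Ay \geq \varepsilon A\1 \gg 0$. In particular $\phi(Ay)$ is well defined even when $s < 0$. Since the order on $C(T)$ is pointwise, $\phi$ is order-preserving (resp.\ concave, convex) on $\pP$ as soon as, for every constant $\beta > 0$, the scalar function
\[
    h_\beta(w) = (w^{1/s}+\beta)^s, \qquad w > 0,
\]
is increasing (resp.\ concave, convex); and since $A$ is linear, that same property transfers from $\phi$ to $G = \phi\circ A$. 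So everything reduces to $h_\beta$.

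For monotonicity, a single differentiation gives $h_\beta'(w) = (w^{1/s}+\beta)^{s-1}\,w^{1/s-1} > 0$ for every $s \neq 0$, so $h_\beta$ is increasing and $G$ is order-preserving on $\pP$.

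For the shape properties, I would differentiate once more. Setting $g(w) = w^{1/s}+\beta$, using $h_\beta = g^s$ together with the identity $g - w^{1/s} = \beta$, the second derivative simplifies to
\[
    h_\beta''(w) = \frac{(1-s)\,\beta}{s}\; w^{1/s-2}\,(w^{1/s}+\beta)^{s-2}.
\]
Since $w,\beta > 0$, every factor other than $(1-s)/s$ is strictly positive, so $h_\beta$ is strictly convex exactly when $s \in (0,1)$ and strictly concave exactly when $s \in (-\infty,0)\cup(1,\infty)$, while at $s = 1$ we have $h_\beta(w) = w+\beta$, which is affine. Hence $h_\beta$ is convex for all $s \in (0,1]$ and concave for all $s \in \RR\setminus[0,1)$, and claims (i)--(ii) follow from the reduction above.

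The main obstacle is just carrying out that second-derivative computation carefully enough that, after cancellation, the sign is seen to be governed by the single factor $(1-s)/s$, with the remaining factors manifestly positive on $\pP$. A minor point requiring care is that for $s < 0$ the expression $z^{1/s}$ needs $z \gg 0$ rather than merely $z \geq 0$, which is why the preliminary observation that $A$ maps $\pP$ into $\pP$ is needed before passing to $h_\beta$. (One could alternatively read $h_\beta(w) = (w^{1/s} + (\beta^s)^{1/s})^s$ as a slice of the two-variable map $(a,c)\mapsto (a^{1/s}+c^{1/s})^s$, which is concave on the positive quadrant when $1/s \leq 1$ and convex when $1/s \geq 1$; but the direct computation handles $s < 0$ and $s = 1$ uniformly.)
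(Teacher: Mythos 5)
Your proposal is correct and follows essentially the same route as the paper: reduce $G$ to the pointwise scalar maps $\phi_t(r) = (r^{1/s} + b(t))^s$ (your $h_\beta$ with $\beta = b(t)$), verify monotonicity and the sign of the curvature there, and lift these properties to $G$ using linearity and positivity of $A$ together with the pointwise order on $C(T)$. The only differences are that you make explicit two things the paper leaves implicit — the second-derivative computation and the check that $A$ maps $\pP$ into $\pP$ so that $\phi(Ay)$ is well defined for $s<0$ — both of which are correct.
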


\begin{proof}
    Fix $t \in T$ and define
    \begin{equation*}
        \phi_t(r) := \left[ r^{1/s} + b(t) \right]^s \qquad (r > 0).
    \end{equation*}
    Then, for any $y \in \pP$, $(Gy)(t)$ can be written as $\phi_t[(Ay)(t)]$.

    We first show that $G$ is order-preserving. Since $A$ is linear, irreducible (and
    hence positive), $A$ is order-preserving on $C(T)$. It is easy to see
    that $\phi_t$ is an increasing function for all $t \in T$, so 
    $t \in T$ and $y, z \in \pP$ with $y \leq z$ implies
    \begin{equation*}
        (Gy)(t) = \phi_t[(Ay)(t)] \leq \phi_t[(Az)(t)] = (Gz)(t).
    \end{equation*}
    In particular, $G$ is order preserving on $\pP$.

    To see that $G$ is concave on $\pP$ when $s < 0$ or $s \geq 1$, we fix $y,
    z \in \pP$ and $\lambda \in [0, 1]$ and let $h := \lambda y + (1 -
    \lambda)z$. Observe that $\phi_t$ is concave in this case for all $t \in
    T$. Hence, fixing $t \in T$ and using linearity of $A$,
    \begin{align*}
        \phi_t [(Ah)(t)] 
        & = \phi_t[\lambda (Ay)(t) + (1 - \lambda) (Az)(t)] \\
        & \geq \lambda \phi_t[(Ay)(t)] + (1 - \lambda)\phi_t[(Az)(t)].
    \end{align*}
    Hence $(Gh)(t) \geq \lambda(Gy)(t) + (1 - \lambda) (Gz)(t)$. Since $t \in
    T$ was arbitrary, we conclude that $G$ is concave on $\pP$. 

    Similarly, (ii) follows from the fact that each $\phi_t$ is convex when $s
    \in (0, 1]$.
\end{proof}

\begin{proposition}\label{p:bkn}
    If $G$ has a fixed point in $\pP$, then $r(A)^s < 1$.
\end{proposition}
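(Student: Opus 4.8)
The plan is to argue by contrapositive in spirit: assume $G$ has a fixed point $y^* \gg 0$ and extract from it a strict supersolution-type inequality for the linear operator $A$, then invoke a Perron--Frobenius/Krein--Rutman argument to bound $r(A)$. First I would let $y^* \in \pP$ satisfy $y^* = ((Ay^*)^{1/s} + b)^s$. The key observation is that, since $b \gg 0$, we have the strict pointwise inequality $(y^*)^{1/s}(t) > ((Ay^*)(t))^{1/s}$ for every $t \in T$ when $s > 0$, hence $y^* \gg Ay^*$; symmetrically, when $s < 0$ the map $r \mapsto r^{1/s}$ is decreasing, so one must track the reversal carefully and it is cleaner to pass through $F$ and the original variable $x^* = (y^*)^{1/s} \gg 0$, which satisfies $x^* = (A(x^*)^s)^{1/s} + b$, i.e. $(x^*)^s = A(x^*)^s + b^{(\cdot)}$... — more precisely, setting $w := (x^*)^s = y^*$ we get $w = A w + \big[((Aw)^{1/s}+b)^s - Aw\big]$, where the bracketed term is strictly positive because $b \gg 0$ and $r \mapsto (r^{1/s}+b(t))^s - r$ exceeds zero for all $r>0$ (this holds for every nonzero real $s$, by convexity/concavity of $r \mapsto (r^{1/s}+c)^s$ together with its value and slope at a reference point, or just directly). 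So in all cases we obtain $w \gg 0$ with $w \gg Aw$, equivalently $(I - A)w \gg 0$.

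Next I would convert $w \gg Aw$ into a spectral-radius bound. The standard route: since $A$ is positive, irreducible and eventually compact, by the Krein--Rutman theorem there is a strictly positive eigenfunctional $\varphi \in P' \setminus \{0\}$ (i.e. $A^* \varphi = r(A)\varphi$) — eventual compactness of $A$ gives eventual compactness of $A^*$, and irreducibility transfers to $A^*$, so Krein--Rutman applies. Pairing $w \gg Aw$ with $\varphi$ and using Lemma~\ref{l:glueck}(2) to get strict positivity of the pairing with a strictly positive element, I obtain $\inner{\varphi, w} > \inner{\varphi, Aw} = \inner{A^*\varphi, w} = r(A)\inner{\varphi, w}$. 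Since $\inner{\varphi, w} > 0$ (again because $w \gg 0$ and $\varphi \neq 0$ in $P'$, via Lemma~\ref{l:glueck}), dividing gives $r(A) < 1$. Finally I would convert $r(A) < 1$ into $r(A)^s < 1$: for $s > 0$ this is immediate; for $s < 0$ I need $r(A) > 1$, so I must instead have derived the reversed inequality $w \ll Aw$ in the $s<0$ case — which is exactly what the monotonicity reversal of $r \mapsto r^{1/s}$ produces, since then $(y^*)^{1/s} = (Ay^*)^{1/s} + b$ with $b \gg 0$ forces $(y^*)^{1/s} \gg (Ay^*)^{1/s}$ but raising the decreasing-direction inequality back up flips it to $y^* \ll Ay^*$. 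Then the same pairing argument yields $r(A) > 1$, hence $r(A)^s < 1$ since $s<0$. Either way the conclusion $r(A)^s < 1$ follows.

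The main obstacle I anticipate is the sign bookkeeping for $s < 0$: one must be scrupulous about which direction each monotone transformation points, and the cleanest presentation is probably to treat $s>0$ and $s<0$ as two parallel cases rather than trying to unify them, and to phrase the elementary scalar fact as: for every nonzero $s \in \RR$ and every $c > 0$, the function $r \mapsto (r^{1/s} + c)^s$ compared with $r$ satisfies, for all $r > 0$, $(r^{1/s}+c)^s > r$ when $s > 0$ and $(r^{1/s}+c)^s < r$ when $s < 0$. (This is just $r^{1/s} + c \gtrless r^{1/s}$ composed with the increasing or decreasing map $u \mapsto u^s$.) A secondary technical point worth stating explicitly is the availability of the strictly positive dual eigenvector: I would cite that irreducibility plus eventual compactness guarantee $r(A)$ is a pole of the resolvent with a strictly positive eigenfunction for $A^*$ — this is where eventual compactness (rather than mere compactness) is used, and it is the hypothesis in the theorem that makes the Krein--Rutman conclusion available even in the infinite-dimensional $C(T)$ setting.
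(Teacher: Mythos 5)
Your plan follows essentially the same route as the paper's proof: from the fixed point one extracts the strict pointwise comparison $\bar y \gg A\bar y$ when $s>0$ (and the reversed $\bar y \ll A\bar y$ when $s<0$), then pairs with a strictly positive eigenfunctional $e^*$ of the adjoint satisfying $A^*e^* = r(A)e^*$ (the paper cites Lemma~4.2.11 of \cite{meyer2012banach} for exactly this) to conclude $r(A)<1$, respectively $r(A)>1$, hence $r(A)^s<1$ in both cases. One caution: your mid-paragraph claim that $(r^{1/s}+c)^s - r > 0$ ``for every nonzero real $s$'' is false for $s<0$ (the inequality reverses), but you explicitly correct this in your final case split, so the argument as ultimately stated is sound and matches the paper.
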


\begin{proof}
    Let $\mM$ be the topological dual space for $C(T)$. Let $A^*$ be the
    adjoint operator associated with $A$.\footnote{The adjoint of a bounded
      linear operator $A: C(T) \to C(T)$ is given by $A^*: \mM \to \mM$ such
      that $\la A^* \mu, x\ra = \la \mu, Ax \ra$ for all $x
      \in C(T)$ and all $\mu \in \mM$.} Since $A$ is irreducible and
    eventually compact, Lemma~4.2.11 of \cite{meyer2012banach} ensures us
    existence of an $e^* \in \mM$ such that
    \begin{equation}\label{eq:pao}
        \inner{e^*, x} > 0 \text{ for all } x \gg 0
        \quad \text{and} \quad
        A^* e^* = r(A) e^*.
    \end{equation}
    Let $\bar y$ be a fixed point of $G$ in $\pP$.

    First consider the case where $s > 0$. In this case we have $\phi_t [y(t)] >
    y(t)$ for all $y \in \pP$ and $t \in T$, so
    \begin{equation*}
        \bar y(t) 
        = (G \bar y)(t) 
        = \phi_t [(A \bar y)(t)] 
        > (A \bar y)(t).
    \end{equation*}
    Hence $\bar y \gg A \bar y$.
    Taking $e^*$ as in \eqref{eq:pao}, we then have $\la e^*, A \bar y - \bar y
    \ra < 0$, or, equivalently, $ \la e^*, A \bar y \ra < \la e^*, \bar y
    \ra$. Using the definition of the adjoint and \eqref{eq:pao} gives $r (A)
    \la e^*, \bar y \ra = \la A^* e^*, \bar y \ra = \la e^*, A \bar y \ra$,
    so it must be that $ r (A) \la e^*, \bar y \ra < \la e^*, \bar y \ra$.
    As a result, $r (A) < 1$. Because $s > 0$, we have $r(A)^s < 1$.

    Now consider the case where $s < 0$.  We have $\phi_t [ y(t)] < y(t)$ for all
    $y \in \pP$ and $t \in T$. As a result, 
    \begin{equation*}
        \bar y(t) = (G\bar y)(t) = \phi_t[(A \bar y)(t)] < (A \bar y)(t).
    \end{equation*}
    But then, taking $e^*$ as in \eqref{eq:pao}, we have $\la
    e^*, A \bar y - \bar y \ra > 0$, or, equivalently, $ \la e^*, A \bar y
    \ra > \la e^*, \bar y \ra$. Using \eqref{eq:pao} again gives $r (A) \la
    e^*, \bar y \ra = \la e^*, A \bar y \ra$, so $r(A) \la e^*, \bar y \ra >
    \la e^*, \bar y \ra$. Hence $r(A) > 1$. Because $s < 0$, this yields $r(A)^s < 1$.
\end{proof}

\begin{proposition}\label{p:inada}
    If $r(A)^s < 1$, then, for each $y \in \pP$, there exists a pair
    $p, q \in \pP$ such that $p \leq y \leq q$, $Gp \gg p$ and $Gq \ll q$. 
\end{proposition}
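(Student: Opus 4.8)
The strategy is to take $p$ and $q$ to be scalar multiples of the positive eigenfunction of $A$. Since $A$ is irreducible and eventually compact, the Perron--Frobenius theory for such operators (the same results of \cite{meyer2012banach} invoked in the proof of Proposition~\ref{p:bkn}, now applied to $A$ rather than its adjoint) furnishes an $e \gg 0$ with $Ae = r(A)e$. For $\lambda > 0$ put $q_\lambda := \lambda e \in \pP$. The computation driving the argument is
\begin{align*}
    (G q_\lambda)^{1/s}
    &= (A q_\lambda)^{1/s} + b
    = \lambda^{1/s} r(A)^{1/s} e^{1/s} + b \\
    &= q_\lambda^{1/s} + \bigl( b - c_0\, \lambda^{1/s} e^{1/s} \bigr),
    \qquad c_0 := 1 - r(A)^{1/s},
\end{align*}
which uses linearity of $A$, the eigen-equation $Ae = r(A)e$, and $(w^s)^{1/s} = w$ for $w \gg 0$. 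Because $r(A) > 0$ (Remark~\ref{r:ir}) and $r(A)^s < 1$, we have $r(A) \neq 1$ and $s \ln r(A) < 0$, hence also $s^{-1}\ln r(A) < 0$, i.e. $r(A)^{1/s} < 1$; so $c_0 > 0$.

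Next I would record how the pointwise power $(\cdot)^{1/s}$ acts on strict order in $C(T)$. Since $T$ is compact, $f \ll g$ means $f(t) < g(t)$ for every $t \in T$, and $u \mapsto u^{1/s}$ is strictly increasing on $(0,\infty)$ when $s > 0$ and strictly decreasing when $s < 0$; hence for $f, g \gg 0$ one has $f \ll g \iff f^{1/s} \ll g^{1/s}$ if $s > 0$ and $f \ll g \iff f^{1/s} \gg g^{1/s}$ if $s < 0$ (and likewise with $\gg$ in place of $\ll$). Combining this with the displayed identity and the fact that $f + h \gg f$ is equivalent to $h \gg 0$, I obtain, for every $\lambda > 0$: the subsolution inequality $G q_\lambda \gg q_\lambda$ is equivalent to $c_0\lambda^{1/s}e^{1/s} \ll b$ when $s > 0$ and to $b \ll c_0\lambda^{1/s}e^{1/s}$ when $s < 0$; the supersolution inequality $G q_\lambda \ll q_\lambda$ is equivalent to $b \ll c_0\lambda^{1/s}e^{1/s}$ when $s > 0$ and to $c_0\lambda^{1/s}e^{1/s} \ll b$ when $s < 0$.

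Now $\lambda \mapsto \lambda^{1/s}$ is a bijection of $(0,\infty)$ onto itself, increasing for $s > 0$ and decreasing for $s < 0$, while $c_0 > 0$, $b \gg 0$ and $e^{1/s} \gg 0$ are fixed; since $e^{1/s}$ is bounded above and below on $T$ by positive constants, $c_0\lambda^{1/s}e^{1/s} \ll b$ holds once $\lambda^{1/s}$ is small enough (it suffices that $c_0\lambda^{1/s}\|e^{1/s}\|_\infty < \min_T b$), and $b \ll c_0\lambda^{1/s}e^{1/s}$ holds once $\lambda^{1/s}$ is large enough (it suffices that $\|b\|_\infty < c_0\lambda^{1/s}\min_T e^{1/s}$). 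Reading off the four cases above, the subsolution inequality $Gq_\lambda \gg q_\lambda$ is secured by taking $\lambda$ small and the supersolution inequality $Gq_\lambda \ll q_\lambda$ by taking $\lambda$ large, in both sign regimes. Finally, $e \gg 0$ and $y \gg 0$ give $\epsilon e \leq y$ for all sufficiently small $\epsilon > 0$ and $y \leq \beta e$ for all sufficiently large $\beta$, each reducing to a comparison of constant multiples of $\1$. Choosing $\epsilon > 0$ small enough and $\beta > 0$ large enough to meet the finitely many thresholds at once and setting $p := \epsilon e$, $q := \beta e$ finishes the proof.

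The one delicate point, and the step I would double-check, is the sign bookkeeping: applying $(\cdot)^{1/s}$ reverses inequalities precisely when $s < 0$, and $\lambda \mapsto \lambda^{1/s}$ reverses monotonicity precisely when $s < 0$, and one must verify that these two reversals cancel so that ``take $\lambda$ small for the subsolution, $\lambda$ large for the supersolution'' remains correct for both signs. Everything else --- the existence of $e$, the algebraic identity, and the selection of thresholds --- is routine. (An alternative is to work first with $F$, for which $Fz \geq b$ makes $\epsilon\1$ a subsolution for every small $\epsilon$ regardless of $s$, and then transport $p, q$ through the conjugacy $H \circ F = G \circ H$; but $H$ is order-reversing when $s < 0$, so this merely relocates the case analysis.)
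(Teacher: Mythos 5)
Your proof is correct and takes essentially the same route as the paper: both arguments choose $p$ and $q$ as small and large scalar multiples of the Perron eigenvector $e \gg 0$ of $A$ and exploit compactness of $T$ together with $b \gg 0$ to obtain the strict sub- and supersolution inequalities. The only difference is presentational --- the paper verifies the inequalities by taking uniform limits of the ratio $G(ce)/(ce)$ as $c \to 0$ and $c \to \infty$ in the two sign cases, whereas you verify them algebraically from the identity $(Gq_\lambda)^{1/s} = q_\lambda^{1/s} + \bigl(b - (1 - r(A)^{1/s})\lambda^{1/s} e^{1/s}\bigr)$ together with the observation that $r(A)^s < 1$ implies $r(A)^{1/s} < 1$.
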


In the proof of Proposition~\ref{p:inada}, we set $r := r(A)$. We use the fact
that $r > 0$, and that there eixsts a dominant eigenvector $e \in \pP$ such that $Ae = r(A)
e$, as follows from irreducibility and eventual compactness of $A$ (see, e.g., 
Lemma~4.2.14 of \cite{meyer2012banach}.)

\begin{proof}
    Observe that, for each $c \in (0, \infty)$, we have 
    \begin{equation*}
        \frac{G(ce)}{ce} 
        = \left(
            \frac{(c re)^{1/s} + b}{(ce)^{1/s}}
          \right)^s
        = \left(
            r^{1/s} + \frac{b}{(ce)^{1/s}}
          \right)^s.
    \end{equation*}
    Now consider the case where $s < 0$ and $r > 1$. Recall that $T$ is
    compact and $b \gg 0$. Then 
    $c \to 0$ implies that $(G(ce))/(ce) \to r$ uniformly, which in turn
    implies 
    \begin{equation}\label{eq:czero}
        \exists \, c_0 > 0 \text{ and } \delta_0 > 1
        \text{ such that } c \leq c_0 \implies G(c e) \geq \delta_0 c e.
    \end{equation}
    Also, if $c \to \infty$, then $(G(ce))/(ce) \to 0$ uniformly, which in turn
    implies 
    \begin{equation}\label{eq:cone}
        \exists \, c_1 > 0 \text{ and } \delta_1 < 1
        \text{ such that } c \geq c_1 \implies G(c e) \leq \delta_1 c e.
    \end{equation}
    Now suppose $s > 0$ and $r < 1$. In this case, $c \to 0$ implies that
    $(G(ce))/(ce) \to \infty$ uniformly, and hence \eqref{eq:czero} holds.
    Also, in the same setting, $c \to \infty$ implies $(G(ce))/(ce) \to r$
    uniformly, and since $r < 1$ we have \eqref{eq:cone}.

    So far we have shown that \eqref{eq:czero} and \eqref{eq:cone} are both
    valid when $r^s < 1$.  Now fix $y \in \pP$.  Since $e, y \gg 0$ and $T$ is
    compact, we have $c e \ll y$ for all sufficiently small $c \in (0,
    \infty)$.  Similarly, $c e \gg y$ for all sufficiently large $c$.
    Combining these facts with \eqref{eq:czero} and \eqref{eq:cone}, we obtain
    $p, q \in \pP$ with $p \leq y \leq q$, $Gp \gg p$ and $Gq \ll q$.
\end{proof}

\begin{proof}[Proof of Theorem~\ref{t:tlbk}]
    Let the conditions of Theorem~\ref{t:tlbk} hold, in the sense that $b \gg
    0$, $s \in \RR \setminus \{0\}$ and $A$ is irreducible and eventually
    compact.  If $s \in (0, 1]$ and $r(A) < 1$, then $G$ is order-preserving and convex
    on $\pP$ by Proposition~\ref{p:sp}. Moreover, $G0 = b^s \gg 0$ and, by
    Proposition~\ref{p:inada}, for each $y \in \pP$ there is a $q \in \pP$
    such that $y \leq q$ and $q \ll Gq$.  Hence, by Theorem~\ref{t:convex},
    $G$ is globally stable on $\pP$.

    If, on the other hand, $s < 0$ and $r(A) > 1$ or $s \geq 1$ and $r(A) < 1$, then $G$ is
    order-preserving and concave on $\pP$ by Proposition~\ref{p:sp}. Moreover,
    by Proposition~\ref{p:inada}, for each $y \in \pP$ there is a pair $p, q
    \in \pP$ such that $p \leq y \leq q$, $Gp \gg p$ and $q \ll Gq$.  Hence,
    by Theorem~\ref{t:intconcave}, $G$ is globally stable on $\pP$.

    We have now shown that $r(A)^s < 1$ implies $G$ is globally stable on
    $\pP$.  For the converse we apply Proposition~\ref{p:bkn}, which tells us
    that $G$ has no fixed point in $\pP$ when $r(A)^s \geq 1$.
\end{proof}

\section{Applications}\label{s:ea}

In this section we give several applications of the preceding results on
power-transformed affine systems.

\subsection{State-Dependent Discounting}

\cite{toda2019wealth} studies an optimal consumption problem and shows that
optimal consumption from current wealth $w$ takes the form
\begin{equation*}
     c(z) = b(z)^{-1/\gamma} w
\end{equation*}
where $\gamma > 0$ is a parameter and $b$ is a function of an uncertainty
state $z \in Z$ satisfying the equation
\begin{equation}\label{eq:toda}
    b(z) = \left\{
        1 + \left[\beta(z) R(z)^{1-\gamma} (Q b)(z)\right]^{1/\gamma} 
    \right\}^\gamma.
\end{equation}
Here $\beta(z) > 0$ is the discount factor in state $z$, $R(z) > 0$ is the gross
interest rate in state $z$ and $Qb$ is defined at each $z \in Z$ by 
\begin{equation*}
    (Qb)(z) = \sum_{z' \in Z} b(z') q(z, z'),
\end{equation*}
where $q$ is the transition matrix of an irreducible Markov chain.
\cite{toda2019wealth} shows that \eqref{eq:toda} has a strictly positive
solution if and only if the spectral radius of the linear operator $A$ defined
by 
\begin{equation}\label{eq:defk}
    (Af)(z) := \beta(z) R(z)^{1-\gamma} \sum_{z' \in Z} f(z') q(z, z')
\end{equation}
has a spectral radius less than unity.  In the setting of
\cite{toda2019wealth}, the set $Z$ is finite.

We can prove the same result using Corollary~\ref{c:bk}, while also adding
uniqueness.  Indeed, with $A$ defined as in \eqref{eq:defk}, we can express
\eqref{eq:toda} as
\begin{equation}\label{eq:toda2}
    b(z) = \left\{ 1 + \left[(A b)(z)\right]^{1/\gamma} \right\}^\gamma.
\end{equation}
Since $q$ is irreducible and $\beta$ and $R$ are strictly positive, the
operator $A$ is irreducible.  Eventual compactness of $A$ is immediate from
finiteness of $Z$. Hence Corollary~\ref{c:bk} implies that \eqref{eq:toda2}
has a unique strictly positive solution if and only if $r(A)^\gamma < 1$.
Since $\gamma > 0$, this reduces to $r(A) < 1$, which is the same condition
used by \cite{toda2019wealth}.

Notice that Corollary~\ref{c:bk} can also be used to relax the finiteness
assumption on $Z$ by assuming instead that $Z$ is a compact Hausdorff space.
In this case continuity restrictions need to be placed on $\beta$ and $R$.

\subsection{Recursive Preferences}

In finance, Epstein--Zin preferences have become increasingly important for
specifying and solving mainstream asset pricing models (see, e.g.,
\cite{hansen2012recursive}, \cite{bansal2012empirical},
\cite{schorfheide2018identifying} or \cite{gomez2021important}). Similar
specifications have also been adopted in macroeconomics (see, e.g.,
\cite{basu2017uncertainty}).  A generic specification takes the form
\begin{equation}\label{eq:ez}
    v = ((1-\beta) c^\rho + \beta (Rv)^\rho)^{1/\rho}
\end{equation}
where $c \in \pP \subset C(T)$ denotes consumption in each state and $\rho$
and $\beta$ are positive parameters.  The operator $R$ is defined at $v \in
\pP$ by $Rv = (Qv^\alpha)^{1/\alpha}$, where $\alpha$ is a nonzero parameter
and $Q$ is an irreducible Markov operator. The unknown function $v$ gives
lifetime utility at each state of world.

It is known (see, e.g., \cite{borovivcka2020necessary}) that \eqref{eq:ez} has
a unique positive solution if and only if $\beta < 1$.  This result can be
derived from our results by setting $w = v^\alpha$, $s = \alpha/\rho$, and rewriting
\eqref{eq:ez} as
\begin{equation*}
    w = 
    \left\{
        (1-\beta) c^\rho
        + \beta (Qw)^{\rho/\alpha}
    \right\}^{\alpha/\rho}
    =
    \left\{
        (1-\beta) c^\rho
        + (\beta^s Qw)^{1/s}
    \right\}^s.
\end{equation*}
By Corollary~\ref{c:bk}, a unique strictly positive solution exists if and
only if the linear operator $A := \beta^s Q$ is such that $r(A)^s < 1$, which
can also be written as $r(A)^{1/s} < 1$.  Since $Q$ is a Markov operator we
have $r(Q) = 1$, and hence $r(A)^{1/s} < 1$ if and only if $\beta < 1$.

\subsection{Wealth-Consumption Ratio}

Another important object in asset pricing is the wealth-consumption ratio of
the representative agent. In a similar environment to the one described
above, the equilibrium wealth-consumption ratio $w$ under Epstein-Zin
preferences satisfies the following first-order condition
\begin{equation}
    \label{eq:wc}
    \beta^s Qw^s = (w - 1)^s,
\end{equation}
where the operator $Q$ is assumed to be an irreducible and eventually compact
linear operator that contains information about consumption growth. Letting
$A := \beta^s Q$, we can rewrite \eqref{eq:wc} as
\begin{equation*}
    w = (A w^s)^{1/s} + 1,
\end{equation*}
which has the same form as~\eqref{eq:abs}. By Corollary~\ref{c:bk}, a unique
strictly positive wealth-consumption ratio exists if and only if the linear
operator $A$ satisfies $r(A)^s < 1$, or equivalently, $\beta r(Q)^{1/s} < 1$.

\subsection{Growth with CES Production Function}

Consider a discrete time multi-sector growth model with total depreciation
and no population growth or technological progress. The law of motion for the
capital-labor ratio is given by $k_{t+1} = s f(Ak_t)$, where $s$ is a savings
rate, $f$ is a CES production function, $A$ is an irreducible matrix that
characterizes the technology, and $k_t$ is a vector of multi-sector
capital-labor ratios. Then the steady state capital-labor ratio satisfies
\begin{equation}
    \label{eq:ces}
    k = s \left\{\theta + (1-\theta) (A k)^{\rho}\right\}^{1/\rho},
\end{equation}
where $\theta \in (0, 1)$, $\rho \neq 0$, and all algebraic operations are
performed elementwise. We can rewrite~\eqref{eq:ces} as
\begin{equation*}
    k = \left\{ s^\rho \theta + \left(s(1 - \theta)^{1/\rho} Ak\right)^\rho
    \right\}^{1/\rho}.
\end{equation*}
Then by Corollary~\ref{c:bk}, a unique and strictly positive vector of
multi-sector capital-labor ratios exists in the steady state if and only if
$s^\rho (1 - \theta) r(A) < 1$.

\bibliographystyle{ecta}
\bibliography{localbib}

\end{document}